\documentclass[english,12pt]{amsart}

\usepackage[latin1]{inputenc} 									
\usepackage[T1]{fontenc}
\usepackage{hyperref}     
\usepackage{geometry}
	\geometry{centering, lmargin=2.5cm, tmargin=4.5cm}      
	\linespread{1.3}
\usepackage[english]{babel}  
\usepackage{amsmath,amsfonts,amssymb,amsthm}
\usepackage{mathrsfs}          
\usepackage{dsfont}
\usepackage[all]{xy}
\usepackage{graphicx}
\usepackage{float}
\usepackage{subfig}								
\usepackage{tikz,tkz-graph}


\newtheorem{thm}{Theorem}[section]

\newtheorem{propo}[thm]{Proposition}
\newtheorem{lem}[thm]{Lemma}
\newtheorem{cor}[thm]{Corollary}

\theoremstyle{definition}
\newtheorem{de}[thm]{Definition}

\theoremstyle{remark}
\newtheorem{rmk}[thm]{Remark}

\newenvironment{notation}{\par\vspace{5pt}\noindent\textbf{Notation.}}{\par\vspace{5pt}}


\newcommand{\CC}{\mathds{C}}
\newcommand{\RR}{\mathds{R}}

\newcommand{\ZZ}{\mathds{Z}}

\newcommand{\FF}{\mathds{F}}
\newcommand{\PP}{\mathds{P}}
\newcommand{\BB}{\mathds{B}}
\newcommand{\A}{\mathcal{A}}
\renewcommand{\P}{\mathcal{P}}
\newcommand{\D}{\mathcal{D}}
\newcommand{\C}{\mathcal{C}}
\newcommand{\M}{\mathcal{M}}

\newcommand{\R}{\mathcal{R}}
\renewcommand{\L}{\mathcal{L}}

\newcommand{\set}[1]{\left\{ #1 \right\}}
\renewcommand{\epsilon}{\varepsilon}

\renewcommand{\tilde}{\widetilde}

\DeclareMathOperator{\HH}{H}
\DeclareMathOperator{\aut}{Aut}
\DeclareMathOperator{\GL}{GL}

\DeclareMathOperator{\dist}{d}

\begin{document}

\title{Multiplicativity of the $\mathcal{I}$-invariant and topology of glued arrangements}
\author{Benoît Guerville-Ballé}
\address{
Department of Mathematics  \newline\indent  
Tokyo Gaugei University  \newline\indent  
4-1-1 Nukuikita-machi  \newline\indent  
Koganeishi, Tokyo, 184-8501  \newline\indent  
Japan  \newline\indent 
}
\email{benoit.guerville-balle@math.cnrs.fr}

\urladdr{www.benoit-guervilleballe.com}

\thanks{Supported by a JSPS Postdoctoral Fellowship} 

\subjclass[2010]{32S22, 32Q55, 54F65}		

\begin{abstract}
	The invariant $\mathcal{I}(\A,\xi,\gamma)$ was first introduced by E.~Artal, V.~Florens and the author. Inspired by the idea of G.~Rybnikov, we obtain a multiplicativity theorem of this invariant under the gluing of two arrangements along a triangle. An application of this theorem is to prove that the extended Rybnikov arrangements form an ordered Zariski pair (\emph{i.e.} two arrangements with the same combinatorial information and different ordered topologies). Finally, we extend this method to a family of arrangements and thus we obtain a method to construct new examples of Zariski pairs.
\end{abstract}

\maketitle

\section*{Introduction}

An important question in the study of an algebraic curve $\mathscr{C}\subset\CC\PP^2$ is to understand the relation between the combinatorial information of a curve and its topology (\emph{i.e.} the topological type of the pair $(\CC\PP^2,\mathscr{C})$). The first results are due to O.~Zariski in~\cite{Zar_Problem,Zar_Topological}, where he proves that the topology is not determined by the combinatorial information. Indeed, he constructs two sextics with the same combinatorial data and such that the fundamental groups of their complements are not isomorphic.

A specific case of algebraic plane curves is line arrangements. They are curves of which all irreducible components are of degree one. P.~Orlik and L.~Solomon prove in ~\cite{OrlSol}, that the cohomology ring of an arrangement is determined by its combinatorial information. This suggests that, in the case of line arrangements, the combinatorics determines the topology. But in~\cite{Ryb_Preprint,Ryb_Article}, G.~Rybnikov explicitly constructs an example like Zariski's one, in the case of arrangements. In this way, E.~Artal proposes, in~\cite{Art_Couples}, to call Zariski pairs such examples (\emph{i.e.} two curves with the same combinatorial information and different topologies).

As far as we know, only two other examples of Zariski pairs of line arrangements are already known. The second one is due to E.~Artal, J.~Carmona, J.I.~Cogolludo and M.A.~Marco in~\cite{ArtCarCogMar_Topology}. Furthermore, this example is the only one which is formed by two complexified real arrangements (\emph{i.e.} arrangements where the lines are defined by real equation). The third known example is obtained by the author in~\cite{Gue_Pairs}. The topologies of this example were distinguished using the invariant $\mathcal{I}(\A,\xi,\gamma)$ (also called the $\mathcal{I}$-invariant). The last two examples are arithmetic Zariski pairs: arrangements with equations conjugated in a number field.

The $\mathcal{I}$-invariant was introduced in~\cite{ArtFloGue} (see also~\cite{Gue_Thesis}). It can be viewed as an adaptation, in the case of line arrangements, of the linking number of the link theory. Inspired by the idea of G.~Rybnikov developed in~\cite{Ryb_Preprint,Ryb_Article}, we prove, in this paper, a theorem of multiplicativity of the invariant $\mathcal{I}(\A,\xi,\gamma)$ under the gluing of two arrangements along a triangle (supporting $\gamma$). As an illustration of this result we show that the extended Rybnikov arrangements form an ordered Zariski pair. Then, we generalize this construction to a family of arrangements. This provides a method to construct new examples of non-arithmetic Zariski pairs.

In Section~\ref{Sec_Invariant}, we recall the construction of the invariant $\mathcal{I}(\A,\xi,\gamma)$, define the extended MacLane arrangements and use them to illustrate the definitions previously given. After having defined the notion of the gluing of two arrangements, the multiplicativity theorem is stated and proved in Section~\ref{Sec_Multiplicativity}. In the first part of Section~\ref{Sec_Application}, we define the extended Rybnikov arrangements from the extended MacLane arrangements studied in Section~\ref{Sec_Invariant}; in the second part, we use the multiplicativity theorem to prove that the extended Rybnikov arrangements form an ordered Zariski pair. To finish this section, we extend the method, developed for the extended Rybnikov arrangements, to the family of arrangements for which the $\mathcal{I}$-invariant is not real.

\newpage

\section{The $\mathcal{I}$-invariant}\label{Sec_Invariant}

In this first section, we give the notion of inner-cyclic combinatorics and of inner-cyclic arrangements. Then, we recall the construction of the $\mathcal{I}$-invariant developed in~\cite{ArtFloGue}. To finish, we construct the extended MacLane arrangements and use it to illustrate the previous notions.

\subsection{Inner-cyclic combinatorics}

\begin{de}\label{Def_Combinatorics}
	A \emph{combinatorics} is a couple $\C=(\L,\P)$, where $\L$ is a finite set and $\P$ a subset of the power set of $\L$, satisfying that:
	\begin{enumerate}
		\item For all $P\in \P$, $\# P\geq 2$;
		\item For any $L_1,L_2 \in\L$, $L_1\neq L_2$, $\exists ! P\in\P$ such that $L_1,L_2\in P$.
	\end{enumerate}
	The combinatorics is \emph{ordered}, if the set $\L$ is ordered.
\end{de}

\begin{notation}
	To simplify the notation, an element $\set{L_i,\cdots,L_j}$ in $\P$ is sometimes denoted by~$P_{i,\cdots,j}$.
\end{notation}

The \emph{incidence graph} $\Gamma_\C$ of a combinatorics $\C=(\L,\P)$ is a way to encode it into a graph. It is defined as a non-oriented bi-partite graph where the set of vertices $V(\C)$ is decomposed into two sets:
\begin{equation*}
	V_\P(\C)=\set{v_P \mid P\in\P} \text{  and  } V_\L(\C)=\set{v_L \mid L\in\L},
\end{equation*}
and an edge of $\Gamma_\C$ joins $v_L\in V_\L(\C)$ to $v_P\in V_\P(\C)$ if and only of $L\in P$.

A \emph{character} on a combinatorics $\C$ is an application $\xi$ from $\L$ to $\CC^*$ such that $\prod\limits_{L\in \L}\xi(L) =1$. It can be extended into an application $\xi_*$ on $V(\C)$ by associating to any $v_P\in V_\P(\C)$ the product~$\prod\limits_{L\ni P} \xi(v_L)$. 

\begin{de}\label{Def_InnerCyclicCombinatorics}
	A character $\xi$ on a combinatorics $\C$ is \emph{inner-cyclic} if there exists a non trivial cycle $\gamma\in\HH_1(\Gamma_\C)$ such that:
	\begin{equation*}
		\forall v\in V(\C),\ \dist(v,\gamma)\leq 1 \Longrightarrow \xi_*(v)=1,
	\end{equation*}
	where $\dist$ is the usual distance on a graph.
\end{de}

\begin{rmk}\label{Rmk_InnerCyclicCombinatorics}
	Definition~\ref{Def_InnerCyclicCombinatorics} of an inner-cyclic character previously given is equivalent to the following points:
	\begin{enumerate}
		\item For all $v_L\in\gamma$, $\xi(L)=1$,
		\item For all $v_P\in\gamma$, if $L\in P$ then $\xi(L)=1$,
		\item For all $P \in L$ such that $v_L\in\gamma$, $\prod\limits_{L_i\in P} \xi(L_i)=1$.
	\end{enumerate}
\end{rmk}

\subsection{Realisations and invariant}

The combinatorics of $\A$ is the data of the set of lines, the set of singular points of $\A$ and the relation between these two sets. It can be defined as the poset of all the intersections of the elements of $\A$, with respect to the reverse inclusion. Let $\C$ be a combinatorics, a complex line arrangement $\A=\set{L_1,\cdots,L_n}$ of $\CC\PP^2$ is a \emph{realisation} of $\C$ if its combinatorics agrees with $\C$. An \emph{ordered realisation} of an ordered combinatorics is defined accordingly. The incidence graph of the combinatorics of an arrangement $\A$ is denoted by $\Gamma_\A$.

\begin{de}
	Let $\A=\set{L_1,\cdots,L_n}$ and $\A'=\set{L'_1,\cdots,L'_n}$ be two ordered realisations of the same combinatorics. A homeomorphism $\phi$ of $\CC\PP^2$, such that $\phi(\A)=\A'$ preserves the ordered if $\phi(L_i)=L'_i$ for all $i\in\set{1,\cdots,n}$; it preserves the orientation if $\phi$ respects the global orientation of $\CC\PP^2$ and the local orientation around the lines (ie it sends meridians on meridians with respect of their orientations).
\end{de}

Let $\A$ be a realisation of a combinatorics $\C$. A character $\xi$ on $\C$ naturally defines a character (also denoted $\xi$) on the first homology group of the complement $E_\A=\CC\PP^2\setminus \A$, by:
\begin{equation*}
	\xi : \left\{
	\begin{array}{ccc}
		\HH_1(E_\A) & \longrightarrow & \CC^* \\
		m_i & \longmapsto & \xi(L_i)
	\end{array}
	\right. ,
\end{equation*}
where $m_i$ is the meridian associated with the line $L_i$.

\begin{de}
	An \emph{inner-cyclic arrangement} is the data of a triplet $(\A,\xi,\gamma)$, where $\A$ is an arrangement, $\xi$ an inner-cyclic character on the combinatorics $\C_\A$ of $\A$ and $\gamma\in\HH_1(\Gamma_{\A})$ the associated cycle. The support of $\gamma$ is the set $\set{L\in\A \mid v_L\in\gamma }$.
	If $\gamma$ is supported by 3 lines, then $(\A,\xi,\gamma)$ is a \emph{triangular} inner-cyclic arrangement.
\end{de}

\begin{notation}
	If an arrangement is triangular inner-cyclic, then we assume, in all the following, that the cycle $\gamma$ is supported by the three first lines of $\A$.
\end{notation}

Let $B_\A$ be the boundary manifold of an arrangement $\A$. It can be defined as the boundary of a regular neighbourhood of $\A$; let us remark that $B_\A\subset E_\A$. By~\cite{Wes}, it is a graph manifold over the incidence graph. Then, $B_\A$ can be decomposed into:
\begin{equation*}
	B_\A=\bigcup\limits_{L\in\L} \mathcal{N}_L \cup \bigcup\limits_{P\in\P}\BB_P,
\end{equation*}
where $\mathcal{N}_L$ is a $S^1$-bundle over $L\setminus \bigcup\limits_{P\in L} D_P$ (with $D_P$ an open disc of $L$ centered in $P$); and $\BB_P$ is the boundary of a 4-ball centered in $P$ without an open tubular neighbourhood of $\A$. There is projection $\rho$ from $\HH_1(B_\A)$ into $\HH_1(\Gamma_\A)$, well defined up to homotopy. 

Let $\gamma$ be a cycle of $\HH_1(\Gamma_\A)$. A \emph{nearby cycle} $\tilde{\gamma}$ associated with $\gamma$ is an embedded $S^1$ in $B_\A$ such that:
\begin{enumerate} 
	\item $\tilde{\gamma}\subset B_\A\setminus \Big( \big( \bigcup\limits_{v_P\notin \gamma} \BB_P \big)  \cup  \big( \bigcup\limits_{v_L\notin \gamma} \mathcal{N}_L \big)  \Big)$,
	\item $\rho([\tilde{\gamma}])=\gamma$, where $[\tilde{\gamma}]$ is the class of $\tilde{\gamma}$ in $\HH_1(B_\A)$. 
\end{enumerate}
Let $i_*:\HH_1(B_\A)\rightarrow\HH_1(E_\A)$ be the application induced by the inclusion of $B_\A$ in $E_\A$. If $(\A,\xi,\gamma)$ is an inner-cyclic arrangement. We define $\mathcal{I}(\A,\xi,\gamma)$ by:
\begin{equation*}
	\mathcal{I}(\A,\xi,\gamma)= \xi \circ i_* ([\tilde{\gamma}]),
\end{equation*}
where $\tilde{\gamma}$ is a nearby cycle in $B_\A$ associated with $\gamma$.
By~\cite[Lemma 2.2]{ArtFloGue}, $\mathcal{I}(\A,\xi,\gamma)$ does not depend of the choice of $\tilde{\gamma}$.

\begin{thm}[\cite{ArtFloGue}]\label{Thm_Invariance}
	Let $\A$ and $\A'$ be two ordered realisations of the same ordered combinatorics. If $(\A,\xi,\gamma)$ and $(\A',\xi,\gamma)$ are two inner-cyclic arrangements with the same oriented and ordered topological type then:
	\begin{equation*}
		\mathcal{I}(\A,\xi,\gamma)=\mathcal{I}(\A',\xi,\gamma).
	\end{equation*}
\end{thm}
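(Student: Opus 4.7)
The plan is to transport the computation of $\mathcal{I}(\A,\xi,\gamma)$ across the hypothesised homeomorphism. By assumption there is a homeomorphism $\phi \colon \CC\PP^2 \to \CC\PP^2$ with $\phi(\A)=\A'$, satisfying $\phi(L_i)=L'_i$ and preserving the global and local orientations. After replacing a regular neighbourhood of $\A$ by its $\phi$-image, I may assume $\phi(B_\A)=B_{\A'}$, $\phi(E_\A)=E_{\A'}$, and that $\phi$ carries the decomposition $B_\A=\bigcup_L \mathcal{N}_L\cup\bigcup_P\BB_P$ to the corresponding one for $\A'$, indexed identically because $\phi$ is ordered. In particular, $\phi$ is compatible, at the level of fundamental groups, with the identity map on the common incidence graph $\Gamma_\A=\Gamma_{\A'}$; hence $\rho_{\A'}\circ\phi_*=\rho_\A$ on $\HH_1(B_\A)$.

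Next I would verify the two compatibilities that turn the definition of $\mathcal{I}$ into something functorial under $\phi$. First, the inclusions yield the commutative square $i_{\A'*}\circ\phi_*=\phi_*\circ i_{\A*}$ from $\HH_1(B_\A)$ to $\HH_1(E_{\A'})$, since $\phi$ restricts from the pair $(E_\A,B_\A)$ to $(E_{\A'},B_{\A'})$. Second, because $\phi$ sends meridians of $L_i$ to meridians of $L'_i$ with matching orientations, the induced map $\phi_*\colon\HH_1(E_\A)\to\HH_1(E_{\A'})$ carries $m_i$ to $m'_i$; by the very definition of the character associated with $\xi$ on each complement, this gives $\xi\circ\phi_*=\xi$.

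Now pick any nearby cycle $\tilde{\gamma}\subset B_\A$ associated with $\gamma$. Its image $\phi(\tilde{\gamma})\subset B_{\A'}$ is an embedded $S^1$ satisfying both defining conditions of a nearby cycle: it avoids the blocks $\BB_{P'}$ and $\mathcal{N}_{L'}$ for $v_{P'},v_{L'}\notin\gamma$ (because $\phi$ permutes the blocks as dictated by the ordering), and $\rho_{\A'}([\phi(\tilde{\gamma})])=\rho_\A([\tilde{\gamma}])=\gamma$ by the compatibility established above. Chaining the three identities,
\begin{equation*}
\mathcal{I}(\A',\xi,\gamma)=\xi\circ i_{\A'*}\bigl([\phi(\tilde{\gamma})]\bigr)=\xi\circ\phi_*\circ i_{\A*}\bigl([\tilde{\gamma}]\bigr)=\xi\circ i_{\A*}\bigl([\tilde{\gamma}]\bigr)=\mathcal{I}(\A,\xi,\gamma),
\end{equation*}
which is the required equality; independence from the choice of nearby cycle is exactly what allows us to use $\phi(\tilde{\gamma})$ on the right-hand side via \cite[Lemma 2.2]{ArtFloGue}.

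The main obstacle I foresee is purely technical: justifying that $\phi$ may be isotoped to send a chosen regular neighbourhood of $\A$ to one of $\A'$ in a way that respects the Waldhausen block decomposition of the boundary manifolds over the common incidence graph. Once this is set up, the proof is essentially formal. If this isotopy step turns out to be subtle, a cleaner alternative is to bypass the identification $\phi(B_\A)=B_{\A'}$ and work intrinsically: replace $\tilde{\gamma}$ by any loop in $E_{\A'}$ homologous in $E_{\A'}$ to a nearby cycle of $\gamma$, and show that the combinatorial recipe computing $\mathcal{I}$ depends only on the ordered, oriented homotopy class of such a loop, a class preserved by $\phi_*$.
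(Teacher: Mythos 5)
The paper does not prove this statement; it is imported verbatim from~\cite{ArtFloGue}, so there is no internal proof to compare against. Your transport argument --- push a nearby cycle through the order- and orientation-preserving homeomorphism, check that $\phi(\tilde{\gamma})$ is again a nearby cycle, and use $\xi\circ\phi_*=\xi$ on meridians together with the commuting inclusion square --- is the natural and essentially correct route, and it matches the spirit of the proof in the cited source. The only real gap is the one you flag yourself: a topological homeomorphism of pairs need not carry a chosen regular neighbourhood of $\A$ to one of $\A'$ compatibly with the graph-manifold block structure, so the identification $\phi(B_\A)=B_{\A'}$ requires an appeal to uniqueness of regular neighbourhoods up to isotopy (or to the independence of $\mathcal{I}$ from the choice of nearby cycle, as in \cite[Lemma 2.2]{ArtFloGue}, applied after an isotopy); once that is granted, the rest is formal as you say.
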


\subsection{Extended MacLane arrangements}\label{SubSec_MacLane}

To illustrate the notions defined in these two previous subsections, and in prevision of the application of Theorem~\ref{Thm_Multiplicativity}, let us introduce the extended MacLane arrangements. These arrangements were first introduced in~\cite{ArtFloGue,Gue_Thesis}, as the first example of inner-cyclic arrangements distinguished by the invariant $\mathcal{I}(\A,\xi,\gamma)$. They can be defined as the usual MacLane arrangements, see~\cite{Mac,Whi}, with an additional line passing through two triple points.  

The combinatorics of the extended MacLane arrangement can be constructed as follows. Let $\PP\FF_3^2$ be the 2-dimension projective space on $\FF_3$, the fields of three elements, and consider the line $I~=~\set{[x:y:0]\mid x,y\in\FF_3}$ of $\PP\FF_3$ as the line at infinity. We define $\L_\M$ by the set $\PP\FF_3^2\setminus \set{[0:0:1],I} \cup Q$, where $Q$ is a point on $I$; and $\P_\M$ is constructed as follows: The elements of $\P_\M$ of cardinality greater (or equal) than 3 are the lines of $\PP\FF_3^2$ which do not pass through the point $[0:0:1]$ (for example $\set{1,2,3}\notin\P_\M$ but $\set{2,5,9}\in\P_\M$), and the elements of cardinality equal to 2 are such that the point~(2) of Definition~\ref{Def_Combinatorics} is verified.

This provides a combinatorics $\C_\M=(\L_\M,\P_\M)$ where the relation $\Subset$ between $\L_\M$ and $\C_\M$ is given by: for all $\ell\in\L_\M$, $P\in\P_\M$ such that $\# P \geq 3$, we have $P\Subset\ell \Leftrightarrow (\ell\in P \text{, in }\PP\FF_3^2)$, and we complete the relation $\Subset$ with the elements $P\in\P_\M$ such that $\# P = 2$. Figure~\ref{Fig_MacLaneCombinatorics} pictures the ordered extended MacLane combinatorics viewed in $\PP\FF_3^2$.

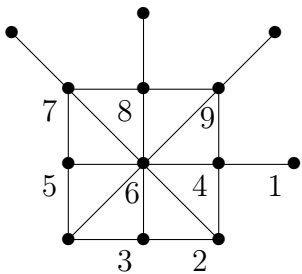
\begin{figure}[ht!]
	\centering
		\begin{tikzpicture}
			\begin{scope}
				\node (P2) at (1,0) {$\bullet$};
				\node (P3) at (2,0) {$\bullet$};
				\node (P5) at (0,1) {$\bullet$};
				\node (P0) at (1,1) {$\bullet$};
				\node (P6) at (2,1) {$\bullet$};
				\node (P4) at (0,2) {$\bullet$};
				\node (P1) at (1,2) {$\bullet$};
				\node (P7) at (2,2) {$\bullet$};
				
				\node (P8) at (1,3) {$\bullet$};
				\node (P9) at (2.75,2.75) {$\bullet$};
				\node (P10) at (3,1) {$\bullet$};
				\node (P11) at (-0.75,2.75) {$\bullet$};
				\node (P12) at (0,0) {$\bullet$};
				
				\node[below left] at (1,0) {3};
				\node[below left] at (2,0) {2};
				\node[below left] at (0,1) {5};
				\node[below left] at (1.1,0.9) {6};
				\node[below left] at (2,1) {4};
				\node[below left] at (0,2) {7};
				\node[below left] at (1,2) {8};
				\node[below left] at (2.1,1.9) {9};
				\node[below left] at (3,1) {1};
				
				\draw (1,0) -- (1,3);
				\draw (0,1) -- (3,1);
				\draw (0,0) -- (2.75,2.75);
				\draw (-0.75,2.75) -- (2,0);
				\draw (0,0) -- (0,2);
				\draw (2,0) -- (2,2);
				
				\draw (0,0) -- (2,0);
				\draw (0,2) -- (2,2);
				
			\end{scope}
		\end{tikzpicture}
		\caption{The ordered extended MacLane combinatorics viewed in $\PP\FF_3^2$ \label{Fig_MacLaneCombinatorics}}
\end{figure}

\begin{rmk}
	In order to obtain the MacLane arrangements, we delete the line $L_1$ of the extended MacLane arrangements, then double points are the lines passing through the ``origin''~$[0:0:1]$.
\end{rmk}

	With the notation of Definition~\ref{Def_Combinatorics}, we can define the extended MacLane combinatorics by $\L_\M~=~\set{L_1,\cdots,L_9}$ and 
	\begin{gather*}
		\P_\M = \left\{ 
		\set{L_1,L_2},
		\set{L_1,L_3},
		\set{L_1,L_4,L_5,L_6},
		\set{L_1,L_7,L_8,L_9}\right., \\
		\set{L_2,L_3},		
		\set{L_2,L_4,L_9}, 
		\set{L_2,L_5,L_8},
		\set{L_2,L_6,L_7},
		\set{L_3,L_4,L_7}, \\
		\left. 
		\set{L_3,L_5,L_9},
		\set{L_3,L_6,L_8},
		\set{L_4,L_8},
		\set{L_5,L_7},
		\set{L_6,L_9}\right\}.
	\end{gather*}
The line $L_1$ is the only one line of $\C_\M$ containing two points of multiplicity 4, thus it is fixed by all automorphisms of the combinatorics. This implies that the automorphism group of $\C_\M$ is a subgroup of  the one of the MacLane combinatorics, which is $\GL_2(\FF_3)$. Furthermore, the invariance of $L_1$ by automorphism implies that $L_2$ and $L_3$ are fixed or exchanged. The matrices realizing such condition are exactly $\left(\begin{array}{cc} a & 0 \\ b & c \end{array}\right)\in \GL_2(\FF_3)$, and all such matrices respect the combinatorics. Thus, we have that $\aut(\C_\M)\simeq \text{D}_6 \simeq \Sigma_3\times\ZZ_2$. Note that the $\ZZ_2$ part determines if $L_2$ and $L_3$ are fixed or exchanged; indeed this part corresponds to the value $1$ or $-1$ of the coefficient $a$ in the previous matrix.

As previously said, this combinatorics is inner-cylic. Let us consider the character $\xi_\M$ on $\C_\M$ defined by:
	\begin{equation*}
		\xi_\M : (L_1,\cdots,L_9) \longmapsto (1,1,1,\zeta,\zeta,\zeta,\zeta^2,\zeta^2,\zeta^2),
	\end{equation*}
	where $\zeta$ is a primitive 3-root of the unity. The character $\xi_\M$ is a triangular inner-cyclic character on $\C_\M$ for the cycle $\gamma_{(1,2,3)}\in\HH_1(\Gamma_{\C_\M})$ defined by:
	\begin{equation*}
	\begin{tikzpicture}[xscale=1.5]
		\node (A1) at (0,0) {$v_{L_1}$};
		\node (A2) at (1,0) {$v_{P_{1,2}}$};
		\node (A3) at (2,0) {$v_{L_2}$};
		\node (A4) at (3,0) {$v_{P_{2,3}}$};
		\node (A5) at (4,0) {$v_{L_{3}}$};
		\node (A6) at (5,0) {$v_{P_{1,3}}$};
		\draw[->] (A1)  -- (A2);
		\draw[->] (A2) -- (A3);
		\draw[->] (A3) -- (A4);
		\draw[->] (A4) -- (A5);
		\draw[->] (A5) -- (A6);
		\draw[->] (A6) to[out=110,in=0] (3.5,0.75) -- (1.5,0.75) to[out=180,in=80] (A1);
	\end{tikzpicture}
\end{equation*}

This combinatorics $\C_\M$ admits two complex realisations defined by:
\begin{gather*}
	L_1:z=0,\quad L_2:x-\bar{a} y=0,\quad L_3:x-a y=0,\quad L_4:y-\bar{a} z=0,\quad L_5:y-z=0,\\ 
	L_6:y-a z=0,\quad L_7:x-z=0,\quad L_8:x-\bar{a} z=0,\quad L_9:x-a z=0,
\end{gather*}
where $a=\zeta$ or $a=\zeta^2$ (with $\zeta$ a primitive cubic root of unity). These arrangements are denoted by $\M^+$ and $\M^-$, and are called the \emph{positive and negatively extended MacLane arrangements}. Let $\phi\in\aut(\C_\M)$ and let $\phi_*:\CC\PP^2\rightarrow\CC\PP^2$ be an application realizing $\phi$. If $\det(\phi)=-1$ then $\phi_*$ sent $\M^+$ on $\M^-$ and conversely; and if $\det(\phi)=1$ then $\phi_*$ fixes as a whole $\M^+$ and $\M^-$ , see~\cite{ArtCarCogMar_Invariants} for details.

The details of the computation of the $\mathcal{I}$-invariant for these arrangements are done in~\cite[Section 5]{ArtFloGue}. With the labelling of this article, we have that:
\begin{equation*}
	\mathcal{I}(\M^+,\xi_\M,\gamma_{(1,2,3)}) =\xi_\M\big( -m_7-m_9 \big) = \zeta^2
	\quad\text{ and }\quad
	\mathcal{I}(\M^-,\xi_\M,\gamma_{(1,2,3)}) =\xi_\M\big( -m_9 \big) = \zeta.
\end{equation*}

\section{Multiplicativity theorem}\label{Sec_Multiplicativity}

Inspired by the idea of G.~Rybnikov in~\cite{Ryb_Preprint}, we first explain how to glue two arrangements along a triangle. Then, we prove that such a gluing implies the multiplicativity of the invariant $\mathcal{I}(\A,\xi,\gamma)$.

Let $\A=\set{L_1,\cdots,L_n}$ and $\A'=\set{L'_1,\cdots,L'_k}$ be two ordered (by the indices) line arrangements such that $L_1$, $L_2$ and $L_3$ (resp. $L'_1$, $L'_2$ and $L'_3$) are in generic position (\emph{i.e.}~$L_1\cap L_2\cap L_3=\emptyset$). 

\begin{de}
	A \emph{gluing} of $\A$ and $\A'$ (in this order) is a projective transformation $\phi$ preserving the orientation and such that:
	\begin{enumerate}
		\item For $i\in\set{1,\cdots,l}$, $\phi(L'_i)=L_i$, and $l\geq 3$.
		\item For all $l< i \leq k$, $\phi(L'_i)\neq L_j$ for any $j\in\set{l+1,\cdots,n}$.
	\end{enumerate}
	The gluing $\phi$ is \emph{generic} if:
	\begin{enumerate}
		\item Excepted $L'_1$, $L'_2$ and $L'_3$ no line of $\A'$ is sent by $\phi$ on a line of $\A$ (\emph{i.e.} $l=3$),
		\item No singular point of $\A'$ is sent by $\phi$ on a singular point of $\A$ excepted that $\phi(L'_1\cap L'_2)=L_1\cap L_2$, $\phi(L'_2\cap L'_3)=L_2\cap L_3$ and $\phi(L'_1\cap L'_3)=L_1\cap L_3$.
	\end{enumerate}
	We define also the \emph{glued arrangement}, denoted by $\A \bowtie_\phi \A'$, as the ordered arrangement:
	\begin{equation*}
		\set{L_1,\cdots,L_n,\phi(L'_{l+1}),\cdots,\phi(L'_k)}.
	\end{equation*}
\end{de}

\begin{rmk}\quad
	\begin{enumerate}
		\item  The gluing is not an abelian operator for ordered line arrangements, but it is commutative if we omit the order hypothesis.
		\item Let $\A$ and $\A'$ be two arrangements. It always exists a generic gluing $\phi$ of $\A$ and $\A'$, since the subgroup of automorphism fixing the triangle is of dimension 1 without any fixed point outside the three lines.
	\end{enumerate}
\end{rmk}

There is on $\phi(\A')$ a natural order induced, from the order on $\A'$, by the application $\phi$. With this order, $\phi$ is an homeomorphism preserving both orientation and order between $\A'$ and $\phi(\A')$. Thus we sometimes will use $\A'$ instead of $\phi(\A')$.

\begin{notation}
	The lines of $\A \bowtie_\phi \A'$ are denoted by $D_1,\cdots,D_d$, with $d=n+k-l$, and their meridians by $\mathfrak{m}_1,\cdots,\mathfrak{m}_d$.
\end{notation}

Let $\xi$ (resp. $\xi'$) be a character on $\HH_1(E_\A)$ (resp. $\HH_1(E_{\A'})$) and let $\phi$ be a gluing of $\A$ and $\A'$. We define on $\HH_1(E_{\A\bowtie_\phi\A'})$ the \emph{glued character} $\xi\bowtie_\phi\xi'$ by:
\begin{equation*}
	\xi\bowtie_\phi\xi' :
	\left\{
	\begin{array}{cccl}
		 \HH_1(E_{\A\bowtie_\phi\A'}) & \longrightarrow & \CC^* & \\
		\mathfrak{m}_i & \longmapsto & \xi(m_i)\xi(m'_i), &\text{ for } i\in\set{1,\cdots,l} \\
		\mathfrak{m}_i & \longmapsto & \xi(m_i), & \text{ for } i\in\set{l+1,\cdots,n} \\
		\mathfrak{m}_i & \longmapsto & \xi'(m'_{i-n+l}), & \text{ for } i\in\set{n+1,\cdots,d}
	\end{array}
	\right.,
\end{equation*}
where $m_i\in\HH_1(E_\A)$ (resp. $m'_i\in\HH_1(E_{\A'})$) is the meridian of $L_i$ (resp. $L'_i)$.	If there is no ambiguity, we denote by $\mathfrak{A}_\phi$ the glued arrangement $\A\bowtie_\phi\A'$, and by $\mathfrak{X}_\phi$ the glued character $\xi\bowtie_\phi\xi'$. Let $\mu$ be the cycle of $\HH_1(\Gamma_{\mathfrak{A}_\phi})$ supported by the line $D_1$, $D_2$ and $D_3$.

\begin{propo}\label{Propo_InnerGluing}
	Let $(\A,\xi,\gamma)$ and $(\A',\xi',\gamma')$ be two triangular inner-cyclic arrangements; let $\phi$ be a gluing of $\A$ and $\A'$, then $(\mathfrak{A}_\phi,\mathfrak{X}_\phi,\mu)$ is a triangular inner-cyclic arrangement.
\end{propo}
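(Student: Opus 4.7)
The plan is to verify the three equivalent conditions of Remark~\ref{Rmk_InnerCyclicCombinatorics} for the triple $(\mathfrak{A}_\phi,\mathfrak{X}_\phi,\mu)$. First I would observe that $\mu$ is indeed a non-trivial cycle of $\HH_1(\Gamma_{\mathfrak{A}_\phi})$ supported by exactly three lines: the projective transformation $\phi$ sends the triangle $\{L'_1\cap L'_2,L'_2\cap L'_3,L'_1\cap L'_3\}$ onto $\{L_1\cap L_2,L_2\cap L_3,L_1\cap L_3\}$, so in $\mathfrak{A}_\phi$ the three lines $D_1,D_2,D_3$ still intersect pairwise in three distinct points, producing the required triangular cycle.

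Next I would check condition~(1) of Remark~\ref{Rmk_InnerCyclicCombinatorics}. For $i\in\{1,2,3\}$, by definition of the glued character
\begin{equation*}
\mathfrak{X}_\phi(D_i)=\xi(L_i)\cdot \xi'(L'_i)=1\cdot 1 = 1,
\end{equation*}
since both $(\A,\xi,\gamma)$ and $(\A',\xi',\gamma')$ are triangular inner-cyclic with triangles supported on their first three lines.

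The main step is verifying condition~(3), that $\prod_{L\in P}\mathfrak{X}_\phi(L)=1$ for every singular point $P$ of $\mathfrak{A}_\phi$ lying on one of the $D_i$'s. I would split into two cases. If $P$ is a vertex of the triangle, say $P=D_1\cap D_2$, then by the gluing construction $P=L_1\cap L_2=\phi(L'_1\cap L'_2)$, so the lines of $\mathfrak{A}_\phi$ through $P$ are the lines of $\A$ through $L_1\cap L_2$ together with the lines of $\phi(\A')$ through $\phi(L'_1\cap L'_2)$, the two lines $D_1,D_2$ being common. A careful bookkeeping using the definition of $\mathfrak{X}_\phi$ gives
\begin{equation*}
\prod_{L\in P}\mathfrak{X}_\phi(L)=\Big(\prod_{L_i\ni L_1\cap L_2}\xi(L_i)\Big)\cdot\Big(\prod_{L'_j\ni L'_1\cap L'_2}\xi'(L'_j)\Big)=1\cdot 1=1,
\end{equation*}
where the doubling for $D_1,D_2$ is exactly compensated by the definition $\mathfrak{X}_\phi(D_i)=\xi(L_i)\xi'(L'_i)$ for $i=1,2$, and each of the two individual products equals~$1$ by condition~(3) applied to $(\A,\xi,\gamma)$ and $(\A',\xi',\gamma')$.

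If $P$ is a non-vertex singular point on some $D_i$, then by the genericity hypothesis on $\phi$, $P$ comes from a singular point of exactly one of $\A$ or $\phi(\A')$, say $P$ comes from $\A$. The lines of $\mathfrak{A}_\phi$ through $P$ are then exactly the lines of $\A$ through $P$, with $\mathfrak{X}_\phi(D_i)=\xi(L_i)\xi'(L'_i)$ accounting for the triangle line and $\mathfrak{X}_\phi(L)=\xi(L)$ for the other lines. Factoring out $\xi'(L'_i)=1$ reduces the product to $\prod_{L\in \A,L\ni P}\xi(L)=1$, again by condition~(3) for $(\A,\xi,\gamma)$. The symmetric case is identical. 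The main obstacle is this double-counting at the triangle vertices, which resolves cleanly thanks to the precise definition of $\mathfrak{X}_\phi$ on the shared meridians $\mathfrak{m}_1,\mathfrak{m}_2,\mathfrak{m}_3$.
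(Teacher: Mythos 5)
Your overall strategy (verifying the conditions of Remark~\ref{Rmk_InnerCyclicCombinatorics}) is the same as the paper's, but two of the three conditions are not actually established. The main gap is condition~(2): for each vertex $P=D_i\cap D_j$ of the triangle, \emph{every} line $D_q$ through $P$ must individually satisfy $\mathfrak{X}_\phi(\mathfrak{m}_q)=1$; you only verify the product $\prod_{D_q\ni P}\mathfrak{X}_\phi(\mathfrak{m}_q)=1$ at such vertices, which is strictly weaker (a product of non-trivial roots of unity can equal $1$). The verification is easy with the tools you already use --- a line $D_q$ through $D_i\cap D_j$ comes from a line of $\A$ through $L_i\cap L_j$ and/or a line of $\A'$ through $L'_i\cap L'_j$, and condition~(2) applied to $(\A,\xi,\gamma)$ and $(\A',\xi',\gamma')$ forces each contributing factor $\xi(m_q)$, $\xi'(m'_q)$ to be $1$ --- but without this step the glued character is not shown to be inner-cyclic.

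The second issue is that your treatment of non-vertex singular points on the $D_i$ invokes ``the genericity hypothesis on $\phi$'', whereas the proposition only assumes $\phi$ is a gluing, not a generic one. For a non-generic gluing a singular point of $\A'$ may be sent onto a singular point of $\A$ away from the triangle vertices, and the shared lines $D_4,\dots,D_l$ may pass through $P$, so the lines through $P$ need not all come from one side. The paper handles the general case by decomposing the set $\D_P$ of lines through $P$ into those coming from $\A$ only, from $\A'$ only, and from both, and regrouping the factors so that the product over the $\A$-indices and the product over the $\A'$-indices each equal $1$ by the inner-cyclic property of the two pieces. Your argument should be carried out in this generality; as written it only covers generic gluings, which is narrower than the statement (and than what Theorem~\ref{Thm_Multiplicativity} requires).
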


\begin{proof}
	The cycle $\mu$ is defined by:
\begin{equation*}
	\begin{tikzpicture}[xscale=1.5]
		\node (A1) at (0,0) {$v_{D_1}$};
		\node (A2) at (1,0) {$v_{P_{1,2}}$};
		\node (A3) at (2,0) {$v_{D_2}$};
		\node (A4) at (3,0) {$v_{P_{2,3}}$};
		\node (A5) at (4,0) {$v_{D_{3}}$};
		\node (A6) at (5,0) {$v_{P_{1,3}}$};
		\draw[->] (A1)  -- (A2);
		\draw[->] (A2) -- (A3);
		\draw[->] (A3) -- (A4);
		\draw[->] (A4) -- (A5);
		\draw[->] (A5) -- (A6);
		\draw[->] (A6) to[out=110,in=0] (3.5,0.75) -- (1.5,0.75) to[out=180,in=80] (A1);
	\end{tikzpicture}
\end{equation*}
	To prove that $(\mathfrak{A}_\phi,\mathfrak{X}_\phi,\mu)$ is an inner-cyclic arrangement, we show that the combinatorics of $\mathfrak{A}_\phi$ satisfies the three conditions of Remark~\ref{Rmk_InnerCyclicCombinatorics}. 
	\begin{enumerate}
	
		\item  The cycle $\mu$ is supported by the lines $D_1$, $D_2$ and $D_3$. Their associated meridians $\mathfrak{m}_1$, $\mathfrak{m}_2$ and $\mathfrak{m}_3$ are sent by $\mathfrak{X}_\phi$ on $\xi(m_1)\xi'(m'_1)$, $\xi(m_2)\xi'(m'_2)$ and $\xi(m_3)\xi'(m'_3)$, respectively. Since $(\A,\xi,\gamma)$ and $(\A',\xi',\gamma')$ are triangular inner-cyclic arrangements then $\xi(m_i)=1$ and $\xi'(m'_i)=1$, for $i=1,2,3$. This implies that the three products are sent on $1$ by the character $\mathfrak{X}_\phi$.
		
		\item Let $v_P\in\mu$, then $P=D_i\cap D_j$ with $i\neq j \in\set{1,2,3}$. If $D_q$ is a line of $\mathfrak{A}_\phi$ such that $q\notin\set{1,2,3}$ and $P\in D_q$, then three cases appear: 
			\subitem a) $q\in\set{n+l+1,\cdots,d}$, then $D_q$ comes from a line of $L'_r\in\A'$ (in fact $r=q-n+l$). This line intersects $L'_i \cap L'_j$ and since $(\A',\xi',\gamma')$ is an inner-cyclic arrangement then $\xi'(m'_r)=1$. Finally, $\mathfrak{X}_\phi(\mathfrak{m}_q)=\xi'(m'_r)=1$.
			\subitem b) $q\in\set{l+1,\cdots,n}$, then $D_q$ comes from the line $L_q$ of $\A$. The same arguments as previously work.
			\subitem c) $q\in\set{4,\cdots,l}$, then $D_q$ comes from the line $L'_q\in\A'$ and the line $L_q\in\A$. This implies that $\mathfrak{X}_\phi(\mathfrak{m}_q)=\xi(m_q)\xi'(m'_q)$. But $(\A,\xi,\gamma)$ is an inner-cyclic arrangement, then $\xi(m_q)=1$, since $L_q$ passes through $L_i\cap L_j$. In the same way, $\xi'(m'_q)=1$; and then $\mathfrak{X}_\phi(\mathfrak{m}_q)=\xi(m_q)\cdot\xi'(m'_q)=1$.
		
		\item Let $P\in D_i$, with $i\in\set{1,2,3}$; and let $\D_P$ be the set $\set{D\in\mathfrak{A}_\phi \mid P\in D }$. It can be decomposed in three subsets $\D_P(\A)$, $\D_P(\A')$ and $\D_P(\A,\A')$ composed respectively of the line of $\mathfrak{A}_\phi$ coming from a line in $\A$, a line in $\A'$ and a line in both $\A$ and $\A'$. From this decomposition of  $\D_P$, we obtain:
		\begin{equation*}
			\begin{array}{ccl}
				\prod\limits_{D_i \in \D_P} \mathfrak{X}_\phi(\mathfrak{m}_i) 
				& =  &	\Big( \prod\limits_{D_i \in \D_P(\A)}  \mathfrak{X}_\phi(\mathfrak{m}_i)  \Big) \cdot
											\Big( \prod\limits_{D_i \in \D_P(\A')}  \mathfrak{X}_\phi(\mathfrak{m}_i)  \Big) \cdot
											\Big( \prod\limits_{D_i \in \D_P(\A,\A')}  \mathfrak{X}_\phi(\mathfrak{m}_i) \Big),\\[15pt]
				& =  &	\Big( \prod\limits_{D_i \in \D_P(\A)} \xi(m_i)   \Big) \cdot
											\Big( \prod\limits_{D_i \in \D_P(\A')} \xi'(m'_{i-n+l})  \Big) \cdot
											\Big( \prod\limits_{D_i \in \D_P(\A,\A')} \xi(m_i)\xi'(m'_{i-n+l})  \Big).
			\end{array}
		\end{equation*}
		Since $(\A,\xi,\gamma)$ is an inner-cyclic arrangement, and since $\D_P(\A)$ and $\D_P(\A,\A')$ cover all the indices of the lines of $\A$ passing through $P$, then: 
		\begin{equation*}
			\begin{array}{ccl}
				\Big( \prod\limits_{D_i \in \D_P(\A)} \xi(m_i)  \Big) \cdot \Big(  \prod\limits_{D_i \in \D_P(\A,\A')} \xi(m_i) \Big)
				& = & \prod\limits_{L_i\ni P} \xi(m_i), \\
				& = & 1.
			\end{array}
		\end{equation*}
		In the same manner, 
		$\Big( \prod\limits_{D_i \in \D_P(\A')} \xi'(m'_{i-n+l})  \Big) \cdot \Big(  \prod\limits_{D_i \in \D_P(\A,\A')} \xi'(m'_{i-n+l}) \Big)=1$. 
		Finally, we have:
		\begin{equation*}
			\prod\limits_{D_i \in \D_P} \mathfrak{X}_\phi(\mathfrak{m}_i) = 1.
			\qedhere
		\end{equation*}
	\end{enumerate}
\end{proof}

\begin{thm}\label{Thm_Multiplicativity}
	Let $(\A,\xi,\gamma)$ and $(\A',\xi',\gamma')$ be two triangular inner-cyclic arrangements, and let $\phi$ be a gluing of $\A$ and $\A'$, then:
	\begin{equation*}
		\mathcal{I}(\mathfrak{A}_\phi,\mathfrak{X}_\phi,\mu)=\mathcal{I}(\A,\xi,\gamma)\cdot\mathcal{I}(\A',\xi',\gamma')
	\end{equation*}
	where $\mu\in\HH_1(\Gamma_{\mathfrak{A}_\phi})$ is supported by $D_1$, $D_2$ and $D_3$.
\end{thm}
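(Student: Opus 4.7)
The strategy I would adopt is to construct an explicit nearby cycle $\tilde{\mu}$ in $B_{\mathfrak{A}_\phi}$ for the cycle $\mu$ whose class in $\HH_1(E_{\mathfrak{A}_\phi})$ decomposes cleanly into contributions coming from $\A$ and from $\A'$, and then to evaluate $\mathfrak{X}_\phi$ on each summand using its defining formula.

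I would first fix nearby cycles $\tilde{\gamma}\subset B_\A$ and $\tilde{\gamma}'\subset B_{\A'}$; by Lemma~2.2 of~\cite{ArtFloGue} the choices are immaterial for the $\mathcal{I}$-invariant. Expanding in meridians,
\begin{equation*}
i_*[\tilde{\gamma}]=\sum_{i=1}^{n} a_i\, m_i,\qquad i_*[\tilde{\gamma}']=\sum_{j=1}^{k} b_j\, m'_j,
\end{equation*}
where $a_i$ is the linking number of $\tilde{\gamma}$ with $L_i$. Since $\tilde{\gamma}$ lives in a regular neighbourhood of the triangle $L_1\cup L_2\cup L_3$, the coefficient $a_i$ vanishes unless $L_i$ passes through one of the three triangle vertices, and analogously for $b_j$. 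By Remark~\ref{Rmk_InnerCyclicCombinatorics}, the inner-cyclic hypothesis then forces $\xi(L_i)=1$ for every $L_i$ with $a_i\neq 0$, and similarly $\xi'(L'_j)=1$ whenever $b_j\neq 0$; this is the key algebraic input.

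Next I would construct $\tilde{\mu}$ piece by piece along the triangle. The gluing identifies the three triangle vertices of $\A$ with those of $\A'$, so each $\BB^{\mathfrak{A}_\phi}_{P_{st}}$ is obtained from $\BB^\A_{P_{st}}$ simply by removing tubes around the extra lines of $\A'$ through $P_{st}$ (and symmetrically), while along each $\mathcal{N}_{D_s}$ for $s\in\{1,2,3\}$ the newly introduced singular points are avoided by a small homotopy. A careful choice of $\tilde{\mu}$ gives
\begin{equation*}
i_*[\tilde{\mu}]=\sum_{i=l+1}^{n} a_i\,\mathfrak{m}_i+\sum_{j=l+1}^{k} b_j\,\mathfrak{m}_{j+n-l}+\sum_{i=4}^{l}(a_i+b_i)\,\mathfrak{m}_i+c_1\mathfrak{m}_1+c_2\mathfrak{m}_2+c_3\mathfrak{m}_3,
\end{equation*}
in which the triangle term is annihilated by $\mathfrak{X}_\phi$ since $\mathfrak{X}_\phi(\mathfrak{m}_s)=\xi(m_s)\xi'(m'_s)=1$ for $s\in\{1,2,3\}$ by Proposition~\ref{Propo_InnerGluing}. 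Applying $\mathfrak{X}_\phi$ with its defining cases, the exponents $a_i+b_i$ on shared non-triangle lines split at no cost, because $\xi(m_i)=\xi'(m'_i)=1$ whenever $a_i$ or $b_i$ is non-zero; what remains is exactly $\xi(i_*[\tilde{\gamma}])\cdot\xi'(i_*[\tilde{\gamma}'])=\mathcal{I}(\A,\xi,\gamma)\cdot\mathcal{I}(\A',\xi',\gamma')$.

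The hard part will be justifying the precise decomposition of $[\tilde{\mu}]$ above: a priori, threading through the enlarged pieces $\BB^{\mathfrak{A}_\phi}_{P_{st}}$ and $\mathcal{N}_{D_s}$ could pick up additional linking with the newly introduced tubes. Following Rybnikov's idea, I would push $\tilde{\mu}$ into the thin annular region lying between the $\A$-tubes and the $\A'$-tubes, where the graph-manifold structures of $B_\A$ and $B_{\A'}$ glue along the common torus over each triangle vertex, and then check that no stray meridian is introduced. This combinatorial verification is the true technical content of the theorem.
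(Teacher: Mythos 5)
There is a genuine gap, and in fact an outright error, in your ``key algebraic input.'' You claim that the coefficient $a_i$ in $i_*[\tilde{\gamma}]=\sum a_i m_i$ vanishes unless $L_i$ passes through a vertex of the triangle, and that the inner-cyclic hypothesis then forces $\xi(L_i)=1$ whenever $a_i\neq 0$. If that were true, every $\mathcal{I}$-invariant of a triangular inner-cyclic arrangement would equal $1$, contradicting the computation $\mathcal{I}(\M^+,\xi_\M,\gamma_{(1,2,3)})=\xi_\M(-m_7-m_9)=\zeta^2$ recalled in Section~\ref{Sec_Invariant}: there the nearby cycle links $L_7$ and $L_9$, which meet the triangle only at the non-vertex point $P_{1,7,8,9}$ of $L_1$. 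Such lines sit at distance $2$ from $\gamma$ in $\Gamma_\A$, so Definition~\ref{Def_InnerCyclicCombinatorics} says nothing about their individual character values; it only forces the \emph{product} of $\xi$ over all lines through each point of a triangle line to be $1$ (which is what makes the invariant independent of the choice of nearby cycle, not what makes it trivial). The same error propagates to your treatment of the shared lines $i\in\{4,\dots,l\}$, where you need $\xi(m_i)^{b_i}\xi'(m'_i)^{a_i}=1$ to reconcile the exponent $a_i+b_i$ with the target $\xi(m_i)^{a_i}\xi'(m'_i)^{b_i}$, and you justify it by the same false vanishing.

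Separately, the step you defer as ``the true technical content'' --- constructing $\tilde{\mu}$ so that its meridian decomposition is exactly the sum of those of $\tilde{\gamma}$ and $\tilde{\gamma}'$ --- is precisely what the paper's proof is designed to avoid, and your argument is incomplete without it. The paper runs the argument in the opposite direction: start from an \emph{arbitrary} nearby cycle $\tilde{\mu}$ for $\mu$ in $B_{\mathfrak{A}_\phi}$ and write $i_*[\tilde{\mu}]=\sum_{\ell\in\mathfrak{A}_\phi}\alpha_\ell\mathfrak{m}_\ell$. Since $E_{\mathfrak{A}_\phi}\subset E_\A$ and the meridians of the lines not in $\A$ die in $\HH_1(E_\A)$, the class of $\tilde{\mu}$ in $\HH_1(E_\A)$ is $\sum_{\ell\in\A}\alpha_\ell m_\ell$, and $\tilde{\mu}$ is itself a nearby cycle for $\gamma$ there (likewise for $\A'$). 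The identity
\begin{equation*}
\prod_{\ell\in\mathfrak{A}_\phi}\mathfrak{X}_\phi(\mathfrak{m}_\ell)^{\alpha_\ell}
=\prod_{\ell\in\A}\xi(m_\ell)^{\alpha_\ell}\cdot\prod_{\ell\in\A'}\xi'(m'_\ell)^{\alpha_\ell}
\end{equation*}
is then immediate from the three-case definition of $\mathfrak{X}_\phi$ (a shared line contributes $\xi(m_\ell)^{\alpha_\ell}\xi'(m'_\ell)^{\alpha_\ell}$, one factor to each product), with no vanishing of individual character values and no explicit geometric construction required. I would recommend abandoning the bottom-up assembly of $\tilde{\mu}$ and adopting this top-down decomposition.
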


\begin{proof}
	Let $\tilde{\mu}$ be a nearby cycle associated with $\mu$ in $B_{\mathfrak{A}_\phi}$.  Exceptionally, we denote by $\mathfrak{m}_\ell$, $m_\ell$ and $m'_\ell$ the meridian of $\ell$ in $E_{\mathfrak{A}_\phi}$, $E_\A$ and $E_{\A'}$ respectively (if this makes sense), and also their homology classes.	Recall that $i_*:\HH_1(B_{\mathfrak{A}_\phi})\rightarrow\HH_1(E_{\mathfrak{A}_\phi})$ is the map induced by the inclusion $B_{\mathfrak{A}_\phi} \subset E_{\mathfrak{A}_\phi}$, then we have:
	\begin{equation*}
		i_*([\tilde{\mu}])=\sum\limits_{\ell\in\mathfrak{A}_\phi} \alpha_\ell.\mathfrak{m}_\ell,
	\end{equation*}
	where the $\alpha_\ell$ are integers depending on the choice of the nearby cycle $\tilde{\mu}$. The class of $\tilde{\mu}$ in $\HH_1(E_\A)$ is $\sum\limits_{\ell\in\A} \alpha_\ell.m_\ell$, furthermore $\tilde{\mu}$ is a nearby cycle associated with $\gamma$ in $E_\A$. Similarly, its class in $\HH_1(E_{\A'})$ is $\sum\limits_{\ell\in\A'} \alpha_\ell.m'_\ell$, and $\tilde{\mu}$ is a nearby cycle associated with $\gamma'$ in $E_{\A'}$. From this, we obtain that:
	\begin{align*}
			\mathcal{I}(\mathfrak{A}_\phi,\mathfrak{X}_\phi,\mu)\ 
			& =\  \mathfrak{X}_\phi \circ i_* ([\tilde{\mu}]), \\
			& =\  \mathfrak{X}_\phi\Big(\sum\limits_{\ell\in\mathfrak{A}_\phi} \alpha_\ell.\mathfrak{m}_\ell \Big), \\
			& =\  \prod\limits_{\ell\in\mathfrak{A}_\phi}\mathfrak{X}_\phi (\mathfrak{m}_\ell)^{\alpha_\ell}, \\
			& =\  \prod\limits_{\ell\in\A}\xi(m_\ell)^{\alpha_\ell} \cdot \prod\limits_{\ell\in\A'} \xi'(m'_\ell)^{\alpha_\ell}, \\
			& =\  \mathcal{I}(\A,\xi,\gamma) \cdot \mathcal{I}(\A',\xi',\gamma').
	\qedhere
	\end{align*}
\end{proof}

\section{Extended Rybnikov arrangements and construction of Zariski pairs}\label{Sec_Application}

To illustrate the multiplicativity theorem previously obtained, we prove that extended Rybnikov arrangements form an ordered Zariski pair. Then we extend this method to the family of arrangements with a non real $\mathcal{I}$-invariant.

\subsection{The extended Rybnikov arrangements}

In~\cite{Ryb_Preprint,Ryb_Article}, G.~Rybnikov constructs two arrangements by gluing (along three concurrent lines) two positive MacLane arrangements for the former; and one positive and one negative MacLane arrangements for the latter. Proving that the fundamental groups of these arrangements are not isomorphic, he proves that their topologies are different. In this section, we obtain a similar result, not with the MacLane arrangements, but with the extended MacLane arrangements to deal with inner-cyclic arrangements. 

Let $\phi^+$ be a generic gluing of $\M^+$ and $\M^+$, and let $\phi^-$ be a generic gluing of $\M^+$ and $\M^-$.

\begin{de}
	The extended Rybnikov arrangements are defined by $\R^+=\M^+\bowtie_{\phi^+}\M^+$ and $\R^-=\M^+\bowtie_{\phi^-}\M^-$.
\end{de}

\begin{rmk}
	The extended Rybnikov arrangements are not the Rybnikov arrangements with two additional lines. Indeed, in~\cite{Ryb_Preprint} G.~Rybnikov glues two MacLane arrangements along concurrent lines.
\end{rmk}

\begin{propo}\label{Propo_SameCombinatorics}
	The extended Rybnikov arrangements have the same combinatorics.
\end{propo}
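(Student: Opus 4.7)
The plan is to reduce the proposition to the observation that $\M^+$ and $\M^-$ are two realisations of the \emph{same} ordered combinatorics $\C_\M$, so that any combinatorial statement about a generic gluing of two such arrangements depends only on $\C_\M$ together with the ordered identification of the triangle $(L_1,L_2,L_3)$, not on the particular analytic realisation chosen.

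First I would unwind the definition of $\mathfrak{A}_\phi = \A\bowtie_\phi\A'$ in the case of a generic gluing (i.e.\ $l=3$) and give an explicit description of $\C_{\mathfrak{A}_\phi}$ in terms of $\C_\A$, $\C_{\A'}$ and the ordered matching of the triangles. The line set is simply the disjoint union of the lines of $\A$ and of $\A'$ modulo the identification $L'_i\sim L_i$ for $i=1,2,3$. The multiple points (those $P\in\P$ with $\#P\geq 3$) are exactly the multiple points coming from $\A$ together with those coming from $\A'$, the only ones shared between the two pieces being the three triangle vertices $L_i\cap L_j$ with $\{i,j\}\subset\{1,2,3\}$; genericity of $\phi$ prevents any additional line identification or multiple-point collision. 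The remaining elements of $\P$ of cardinality $2$ are then forced by condition~(2) of Definition~\ref{Def_Combinatorics}. Consequently, $\C_{\mathfrak{A}_\phi}$ depends only on the ordered combinatorics of the two factors and on the ordered gluing of their triangle lines.

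Applying this description to $\R^+ = \M^+\bowtie_{\phi^+}\M^+$ and $\R^- = \M^+\bowtie_{\phi^-}\M^-$, the two factors in each gluing realise the same ordered combinatorics $\C_\M$, and in both cases the gluings $\phi^+$ and $\phi^-$ identify $L'_i$ with $L_i$ for $i=1,2,3$ in the same way. Hence the general recipe above produces the same combinatorial datum in both cases, which is precisely Proposition~\ref{Propo_SameCombinatorics}.

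I do not expect a serious obstacle here: the argument is essentially a definition chase. The only point that merits attention is the verification that ``no extra multiple point is created'' by a generic gluing, and this is exactly what the genericity hypothesis in the definition of the gluing encodes.
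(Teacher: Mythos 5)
Your argument is correct and is essentially the paper's own: the paper likewise justifies the proposition by observing that $\phi^+$ and $\phi^-$ are generic gluings of two realisations of the same ordered combinatorics $\C_\M$, so the glued combinatorics is determined by $\C_\M$ and the ordered matching of the triangle. The only difference is that the paper goes on to write out the resulting combinatorics $\C_\R$ explicitly (the two copies of $\C_\M$ sharing the triangle, plus the forced double points between them), whereas you leave it in the abstract form; both rest on the same reading of the genericity condition as excluding any extra incidences.
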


This comes from the fact that the gluings $\phi^+$ and $\phi^-$ considered are generic, and we have $\mathcal{C}_{\R}~=~(\L_{\R},\P_{\R})$, with $\L_{\R}=\set{D_1,\cdots,D_{15}}$ and $\P_{\R}$ is composed of:
\begin{itemize}
	\item the first copy of $\C_\M$:
		\begin{gather*}
			\left\{ 
			\left\{D_1, D_2\right\}, 
			\left\{D_1, D_3\right\}, 
			\left\{D_1, D_4, D_5, D_6\right\}, 
			\left\{D_1, D_7, D_8, D_9\right\}, 
			\left\{D_2, D_3\right\},  
			\right.\\
			\left\{D_2, D_4, D_9\right\}, 
			\left\{D_2, D_5, D_8\right\}, 
			\left\{D_2, D_6, D_7\right\},  
			\left\{D_3, D_4, D_7\right\}, 
			\\
			\left. 
			\left\{D_3, D_5, D_9\right\}, 
			\left\{D_3, D_6, D_8\right\}, 
			\left\{D_4, D_8\right\}, 
			\left\{D_7, D_5\right\}, 
			\left\{D_6, D_9\right\} 
			\right\},
		\end{gather*}
	\item the second copy of $\C_\M$ (without the intersection of $D_1$, $D_2$ and $D_3$):
		\begin{gather*}
				\left\{ 
				\left\{D_1, D_{10}, D_{11}, D_{12}\right\}, 
				\left\{D_1, D_{13}, D_{14}, D_{15}\right\}, 
				\left\{D_2, D_{10}, D_{15}\right\}, 
				\left\{D_2, D_{12}, D_{13}\right\}, 
				\left\{D_3, D_{10}, D_{13}\right\}, 
				\right.\\
				\left.  
				\left\{D_2, D_{11}, D_{14}\right\}, 
				\left\{D_3, D_{11}, D_{15}\right\}, 
				\left\{D_3, D_{12}, D_{14}\right\}, 
				\left\{D_{10}, D_{14}\right\}, 
				\left\{D_{13}, D_{11}\right\}, 
				\left\{D_{12}, D_{15}\right\} 
				\right\},
		\end{gather*}
	
	\item the double points between the two copies of $\C_\M$ (due to the genericity of the gluing):
		\begin{gather*}
			\left\{ \left\{D_4, D_{10}\right\}, \left\{D_4, D_{11}\right\}, \left\{D_4, D_{12}\right\}, \left\{D_4, D_{13}\right\}, \left\{D_4, D_{14}\right\}, \left\{D_4, D_{15}\right\}, \left\{D_5, D_{10}\right\},   \right.\\
			\left. \left\{D_5, D_{11}\right\}, \left\{D_5, D_{12}\right\}, \left\{D_5, D_{13}\right\}, \left\{D_5, D_{14}\right\}, \left\{D_5, D_{15}\right\}, \left\{D_6, D_{10}\right\}, \left\{D_6, D_{11}\right\},    \right.\\
			\left. \left\{D_6, D_{12}\right\}, \left\{D_6, D_{13}\right\}, \left\{D_6, D_{14}\right\}, \left\{D_6, D_{15}\right\}, \left\{D_7, D_{10}\right\}, \left\{D_7, D_{11}\right\}, \left\{D_7, D_{12}\right\},    \right.\\
			\left. \left\{D_7, D_{13}\right\}, \left\{D_7, D_{14}\right\}, \left\{D_7, D_{15}\right\}, \left\{D_8, D_{10}\right\}, \left\{D_8, D_{11}\right\}, \left\{D_8, D_{12}\right\}, \left\{D_8, D_{13}\right\},    \right.\\
			\left. \left\{D_8, D_{14}\right\}, \left\{D_8, D_{15}\right\}, \left\{D_9, D_{10}\right\}, \left\{D_9, D_{11}\right\}, \left\{D_9, D_{12}\right\}, \left\{D_9, D_{13}\right\}, \left\{D_9, D_{14}\right\}, \left\{D_9, D_{15}\right\} \right\}.
		\end{gather*}
\end{itemize}


Let $\phi\in\aut(\C_\R)$ be an automorphism of the extended Rybnikov combinatorics. Since the line $D_1$ is the only one containing four points of multiplicity 4, then it is fixed by $\phi$. In the same way, the lines $D_2$ and $D_3$ are fixed or exchanged by $\phi$. The twelve remaining lines can be combinatorially decomposed in two sets corresponding to the two copies of the extended MacLane combinatorics. Thus we have that $\aut(\C_\R)$ is a subgroup of $\big(\aut(\C_\M)\times\aut(\C_\M) \big) \rtimes \ZZ_2$. Here the $\ZZ_2$ part determines if the two previous sets are fixed or exchanged. Furthermore, the automorphism of the first copy of the extended Maclane, determines only the action on $D_2$ and $D_3$ in the second copy. By similar arguments than in Section~\ref{SubSec_MacLane}, we can represent the automorphism of $\C_\R$ as matrices of $\GL_3(\FF_3)$ of the following types:
\begin{equation*}
	\left(\begin{array}{ccc} 
		\pm 1 & 0 & 0 \\ 
		a & \pm 1 & 0 \\
		b & 0 & \pm 1 
	\end{array}\right)
	\ \text{ or }\ 
	\left(\begin{array}{ccc} 
		\pm 1 & 0 & 0 \\ 
		a & 0 & \pm 1 \\
		b & \pm 1 & 0 
	\end{array}\right),
\end{equation*}
where $a,b$ are in $\FF_3$. Then we can check that $\aut(\C_\R)\simeq \big( (\Sigma_3)^2 \rtimes \ZZ_2 \big) \times \ZZ_2$, where the semi-product by $\ZZ_2$ determines if the two copies are exchanged or not, while the direct-product by $\ZZ_2$ determines if $D_2$ and $D_3$ are exchanged or not.

\subsection{Ordered topology of extended Rybnikov arrangements} 

As an illustration of the Theorem~\ref{Thm_Multiplicativity}, we prove that the extended Rybnikov arrangements form an ordered Zariski pair. After that, we give a way to remove the ordered hypothesis. Even if this result is not a consequence of the result of G.~Rybnikov, it is very close to it. The fact that Rybnikov arrangements satisfy this property is proved in~\cite{Ryb_Preprint,Ryb_Article,ArtCarCogMar_Invariants}, but the techniques used in this paper are new. 

\begin{thm}\label{Thm_Rybnikov}
	There is no ordered-preserving homeomorphism between $(\CC\PP^2,\R^+)$ and $(\CC\PP^2,\R^-)$.
\end{thm}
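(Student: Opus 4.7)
The plan is to apply Theorem~\ref{Thm_Multiplicativity} on both sides and obtain two distinct values of the $\mathcal{I}$-invariant on a single character of the common combinatorics $\C_\R$; Theorem~\ref{Thm_Invariance} then produces the obstruction. The crucial preliminary is Proposition~\ref{Propo_SameCombinatorics}, which ensures that $\R^+$ and $\R^-$ realise the same ordered combinatorics, so that the two $\mathcal{I}$-invariants can be set up and compared for a common character.

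First I would choose the inner-cyclic data: on $\R^+$ take $\mathfrak{X}^+ = \xi_\M \bowtie_{\phi^+} \xi_\M$ and on $\R^-$ take $\mathfrak{X}^- = \xi_\M \bowtie_{\phi^-} \xi_\M$, where $\xi_\M$ is the character of Section~\ref{SubSec_MacLane}. Proposition~\ref{Propo_InnerGluing} guarantees that the triples $(\R^\pm, \mathfrak{X}^\pm, \mu)$ are triangular inner-cyclic arrangements. Under the ordered combinatorial identification from Proposition~\ref{Propo_SameCombinatorics}, $\mathfrak{X}^+$ and $\mathfrak{X}^-$ are literally the \emph{same} character $\mathfrak{X}$ on $\C_\R$, since the formula defining a glued character depends only on combinatorial labels, not on the realisation $\M^+$ or $\M^-$. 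Plugging the values $\mathcal{I}(\M^+,\xi_\M,\gamma_{(1,2,3)}) = \zeta^2$ and $\mathcal{I}(\M^-,\xi_\M,\gamma_{(1,2,3)}) = \zeta$ recalled at the end of Section~\ref{SubSec_MacLane} into Theorem~\ref{Thm_Multiplicativity}, I obtain
\begin{equation*}
\mathcal{I}(\R^+,\mathfrak{X},\mu) = \zeta^2\cdot\zeta^2 = \zeta \qquad \text{and} \qquad \mathcal{I}(\R^-,\mathfrak{X},\mu) = \zeta^2\cdot\zeta = 1.
\end{equation*}

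Since these two values are distinct, Theorem~\ref{Thm_Invariance} immediately excludes the existence of an ordered and orientation-preserving homeomorphism between $(\CC\PP^2,\R^+)$ and $(\CC\PP^2,\R^-)$. To drop the orientation hypothesis, I would argue as follows: given an ordered, orientation-reversing homeomorphism, composing with the complex conjugation $c\colon\CC\PP^2\to\CC\PP^2$ yields an ordered, orientation-preserving homeomorphism from $(\CC\PP^2,\R^+)$ onto $(\CC\PP^2,\overline{\R^-})$. Because conjugation inverts meridians, the $\mathcal{I}$-invariant of $\overline{\R^-}$ computed with the conjugate character is the complex conjugate of the one on $\R^-$, namely $\overline{1}=1$, still different from $\zeta$; the contradiction thus survives for both parities of orientation.

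The main obstacle is this last paragraph. One must verify carefully that complex conjugation interacts as expected with the ordered realisation of $\C_\R$ (that is, the identification of $\overline{\R^-}$ with $\R^-$ as ordered arrangements, up to a combinatorial relabelling swapping the two copies of $\C_\M$, which by the commutativity-without-order remark following the definition of the gluing is benign), and that the $\mathcal{I}$-invariant genuinely gets complex-conjugated when the orientation is reversed. Everything else is essentially bookkeeping around Theorem~\ref{Thm_Multiplicativity} applied to the two MacLane building blocks whose invariants were computed in Section~\ref{SubSec_MacLane}.
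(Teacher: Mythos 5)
Your proposal is correct and follows essentially the same route as the paper: identical choice of glued character $\mathfrak{X}$, the same application of Proposition~\ref{Propo_InnerGluing} and Theorem~\ref{Thm_Multiplicativity} to get $\mathcal{I}(\R^+,\mathfrak{X},\mu)=\zeta\neq 1=\mathcal{I}(\R^-,\mathfrak{X},\mu)$, and Theorem~\ref{Thm_Invariance} to rule out ordered, orientation-preserving homeomorphisms. Your final hand-rolled conjugation step (including the subtleties you flag about how conjugation interacts with meridian orientations) is exactly what the paper packages as Lemma~\ref{Lem_OrientationDeletion}, which it applies after checking $\mathcal{I}(\overline{\R^+},\mathfrak{X},\mu)=\zeta^2\neq 1$; you should invoke that lemma rather than reprove it informally.
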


Before proving this theorem, we have to state the following lemma.

\begin{lem}\label{Lem_OrientationDeletion}
	Let $\A_1$ and $\A_2$ be two arrangements	with the same combinatorics and such that there is no homeomorphism preserving both orientation and order between $(\CC\PP^2,\A_1)$ and $(\CC\PP^2,\A_2)$. If there is no orientation-preserving homeomorphism between $\A_2$ and the complex conjugate of $\A_1$ then there is no order-preserving homeomorphism between $(\CC\PP^2,\A_1)$ and $(\CC\PP^2,\A_2)$.
\end{lem}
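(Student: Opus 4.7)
The plan is to proceed by contrapositive: assume that an order-preserving homeomorphism $\psi\colon(\CC\PP^2,\A_1)\to(\CC\PP^2,\A_2)$ exists, and derive a contradiction with one of the two hypotheses. The key device will be composition with complex conjugation $c\colon\CC\PP^2\to\CC\PP^2$.

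First I would observe that every self-homeomorphism of $\CC\PP^2$ automatically preserves the global orientation, since $\sigma(\CC\PP^2)=1\neq-1$ and the signature changes sign under a reversal of orientation. Consequently $\psi$ already respects the global orientation of $\CC\PP^2$, and the only remaining freedom inside the paper's notion of ``orientation-preserving'' is whether $\psi$ preserves or reverses the local (meridian) orientations around the lines. If it preserves them, then $\psi$ is orientation-preserving in the sense of the paper and, combined with the assumed order-preservation, contradicts the first hypothesis outright.

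Otherwise $\psi$ reverses meridian orientations, and here I would use two elementary properties of complex conjugation on $\CC\PP^2$: it preserves the global orientation (the real Jacobian of $(z_1,z_2)\mapsto(\bar z_1,\bar z_2)$ equals $(-1)^2=+1$), yet on each transverse complex disc to a line it restricts to the reflection $z\mapsto\bar z$ and hence reverses the corresponding meridian. Thus $\psi$ and $c$ share both orientation signs, so the composition $\psi\circ c$ preserves both the global orientation and the meridian orientations. Since $c$ interchanges $\A_1$ and its complex conjugate $\overline{\A_1}$, the map $\psi\circ c$ is an orientation-preserving homeomorphism from $(\CC\PP^2,\overline{\A_1})$ to $(\CC\PP^2,\A_2)$, contradicting the second hypothesis.

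The main obstacle is really the orientation bookkeeping: one must keep in mind that the paper's ``orientation-preserving'' bundles together the global orientation of $\CC\PP^2$ and the meridian orientations, that the former is automatic by the signature argument, and that complex conjugation is precisely the natural map flipping the meridian sign while leaving the global sign untouched---exactly the correction needed to repair the ``bad'' case of $\psi$. No further data about $\A_1$ or $\A_2$ enters the argument; once the two sign properties of $c$ are in hand, the case split above is the entire proof.
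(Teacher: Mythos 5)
Your argument is correct in substance, but it necessarily takes a different route from the paper, because the paper does not prove this lemma at all: it is quoted as a consequence of \cite[Theorem~4.19]{ArtCarCogMar_Topology} (with a full proof in \cite[Theorem~6.4.8]{Gue_Thesis}). What you have written is essentially a self-contained reconstruction of that cited argument: the signature obstruction $\sigma(\CC\PP^2)=1$ rules out globally orientation-reversing homeomorphisms, the dichotomy on meridian signs splits the putative order-preserving homeomorphism $\psi$ into a case contradicting the first hypothesis and a case handled by composing with complex conjugation $c$, and the two sign computations for $c$ (global $+$, meridional $-$) are exactly right. Note also that $\psi\circ c$ is automatically order-preserving (both factors are), so in the second case you in fact contradict even the weaker reading of the second hypothesis in which ``orientation-preserving'' is supplemented by ``order-preserving''---which is the form actually verified when the lemma is applied in the proof of Theorem~\ref{Thm_Rybnikov}.

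The one step you should not leave implicit is the claim that the only freedom is a \emph{single} global sign on all meridians: a priori $\psi$ could send $m_i$ to $\epsilon_i m_i$ with the signs $\epsilon_i=\pm1$ varying with $i$, and then neither branch of your dichotomy would apply. This is easily excluded: in $\HH_1(E_{\A})$ the meridians satisfy the relation $m_1+\cdots+m_n=0$, and an order-preserving homeomorphism induces an isomorphism $\HH_1(E_{\A_1})\to\HH_1(E_{\A_2})$ carrying this relation to $\sum_i\epsilon_i m_i=0$; since the only relation among the $m_i$ is the multiple of $\sum_i m_i$, all the $\epsilon_i$ must coincide. With that line added, the proof is complete.
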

It is a consequence of~\cite[Theorem~4.19]{ArtCarCogMar_Topology} (see also~\cite[Theorem 6.4.8]{Gue_Thesis} for a complete proof)

\begin{proof}[Proof of Theorem~\ref{Thm_Rybnikov}]
	We have shown, in Section~\ref{Sec_Invariant}, that both $(\M^+,\xi_\M,\gamma_{(1,2,3)})$ and $(\M^-,\xi_\M,\gamma_{(1,2,3)})$ are triangular inner-cyclic arrangements, where $\xi_\M$ is defined by:
	\begin{equation*}
		\xi_\M : (m_1,\cdots,m_9) \longmapsto (1,1,1,\zeta,\zeta,\zeta,\zeta^2,\zeta^2,\zeta^2),
	\end{equation*}
	with $\zeta$ a 3-root of the unity and $\gamma_{(1,2,3)}$ the cycle supported by the line $L_1$, $L_2$ and $L_3$. Since $\M^+$ and $\M^-$ have the same combinatorics then $\xi_\M$ can be considered as a character on $\HH_1(E_{\M^+})$ or on $\HH_1(E_{\M^-})$. Then, we define, on $\HH_1(E_{\R^+})$ and on $\HH_1(E_{\R^-})$, the same character $\mathfrak{X}$ by $\xi_\M\bowtie_{\phi^+}\xi_\M$ and $\xi_\M\bowtie_{\phi^-}\xi_\M$. Explicitly, we have:
	\begin{equation*}
		\mathfrak{X} : (\mathfrak{m}_1,\cdots,\mathfrak{m}_{15}) \longmapsto (1,1,1,\zeta,\zeta,\zeta,\zeta^2,\zeta^2,\zeta^2,\zeta,\zeta,\zeta,\zeta^2,\zeta^2,\zeta^2).
	\end{equation*}
	
	By Section~\ref{Sec_Invariant} (see also~\cite{ArtFloGue}), we know that $(\M^+,\xi_\M,\gamma_{(1,2,3)})$ and $(\M^-,\xi_\M,\gamma_{(1,2,3)})$ are triangular inner-cyclic arrangements. By Proposition~\ref{Propo_InnerGluing}, we know that $(\R^+,\mathfrak{X},\mu)$ and $(\R^-,\mathfrak{X},\mu)$ are triangular inner-cyclic arrangements, where the cycle $\mu$ of $\Gamma_{\R^+}=\Gamma_{\R^-}$ is supported by $D_1$, $D_2$ and $D_3$. Thus, it makes sense to consider $\mathcal{I}(\R^+,\mathfrak{X},\mu) $ and $\mathcal{I}(\R^-,\mathfrak{X},\mu) $.
	
	By the computations done in Section~\ref{Sec_Invariant}, we have know that: 
	\begin{equation*}
		\mathcal{I}(\M^+,\xi_\M,\gamma_{(1,2,3)})=\zeta^2 \quad\text{ and }\quad \mathcal{I}(\M^-,\xi_\M,\gamma_{(1,2,3)})~=~\zeta.
	\end{equation*}
	Then, by Theorem~\ref{Thm_Multiplicativity}:
	\begin{equation*}
		\mathcal{I}(\R^+,\mathfrak{X},\mu) = \mathcal{I}(\M^+,\xi_\M,\gamma_{(1,2,3)}) \cdot \mathcal{I}(\M^+,\xi_\M,\gamma_{(1,2,3)}) = \zeta^2\cdot\zeta^2=\zeta
	\end{equation*}
	and,
	\begin{equation*}
		\mathcal{I}(\R^-,\mathfrak{X},\mu) = \mathcal{I}(\M^+,\xi_\M,\gamma_{(1,2,3)}) \cdot \mathcal{I}(\M^-,\xi_\M,\gamma_{(1,2,3)}) = \zeta^2\cdot \zeta = 1 .			
	\end{equation*}
	Theorem~\ref{Thm_Invariance} implies that there is no homeomorphism preserving both orientation and order between $(\CC\PP^2,\R^+)$ and $(\CC\PP^2,\R^-)$. Since the $\mathcal{I}$-invariant commutes with the complex conjugacy (see~\cite[Proposition 2.5]{ArtFloGue}), we have that $\mathcal{I}(\overline{\R^+},\mathfrak{X},\gamma) =\overline{\zeta}= \zeta^2$. Then by Theorem~\ref{Thm_Invariance}, there is no homeomorphism preserving orientation and order between $(\CC\PP^2,\overline{\R^+})$ and $(\CC\PP^2,\R^-)$. Applying Lemma~\ref{Lem_OrientationDeletion}, we obtain the result.
\end{proof}

Unfortunately, we cannot obtain a better result on $\R^+$ and $\R^-$ (that is: we cannot remove the ordered hypothesis). Indeed, we have seen that $\aut(\C_\R)\simeq \big( (\Sigma_3)^2 \rtimes \ZZ_2 \big) \times \ZZ_2$. This implies that we can take on the first copy an automorphism with determinant 1, and on the second copy an automorphism with determinant -1. The discussion of Subsection~\ref{SubSec_MacLane} implies that such an automorphism will transform $\R^+$ into $\R^-$. Nevertheless, it is possible to solve this problem by adding some lines in such a way that the automorphisms of each copy are determined by the action on $D_2$ and $D_3$ (for example by adding an additional line in each copies of the extended MacLane passing through a triple point and a double point not in the triangle formed by $D_1$, $D_2$ and $D_3$).  With these two additional lines, we can remove the ordered hypothesis in the previous theorem and then obtain a Zariski pair.

\subsection{Construction of Zarski pairs} 

All the construction previously done for the extended Rybnikov arrangements can be adapted to any inner-cyclic arrangements with a non real $\mathcal{I}$-invariant. Then the following theorem gives a method to construct new examples of Zariski pairs.

If $(\A,\xi,\gamma)$ is a triangular inner-cyclic arrangement, and if $\phi^+$ (resp. $\phi^-$) is a generic gluing of two copies of $\A$ (resp. of a copy of $\A$ and a copy of $\overline{\A}$, the complex conjugate of $\A$), then we denote by $\mathfrak{A}^+$ (resp. $\mathfrak{A}^-$) the glued arrangement associated to $\phi^+$ (resp. $\phi^-$).

\begin{thm}\label{Thm_ZP}
	If $(\A,\xi,\gamma)$ is a triangular inner-cyclic arrangement such that $\mathcal{I}(\A,\xi,\gamma)$ is not real, then there is no order-preserving homeomorphism between $(\CC\PP^2,\mathfrak{A}^+)$ and $(\CC\PP^2,\mathfrak{A}^-)$.
\end{thm}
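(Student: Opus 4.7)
The plan is to run the argument of Theorem~\ref{Thm_Rybnikov} in its general form. Write $v = \mathcal{I}(\A,\xi,\gamma)$. Since $\A$ and $\overline{\A}$ share the same combinatorics (combinatorics is invariant under complex conjugation), the genericity of $\phi^+$ and $\phi^-$ ensures, exactly as in Proposition~\ref{Propo_SameCombinatorics}, that $\mathfrak{A}^+$ and $\mathfrak{A}^-$ realize the same ordered combinatorics; so it makes sense to compare them topologically via the $\mathcal{I}$-invariant.

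Next, define $\mathfrak{X}^\pm = \xi \bowtie_{\phi^\pm} \xi$, using the same character $\xi$ on both factors (which is legitimate because the two factors have the same combinatorics). Proposition~\ref{Propo_InnerGluing} then yields that $(\mathfrak{A}^+, \mathfrak{X}^+, \mu)$ and $(\mathfrak{A}^-, \mathfrak{X}^-, \mu)$ are triangular inner-cyclic arrangements, with $\mu$ supported on the common triangle $D_1, D_2, D_3$. Applying Theorem~\ref{Thm_Multiplicativity} together with the conjugation property of $\mathcal{I}$ (cited at \cite[Proposition 2.5]{ArtFloGue}, which gives $\mathcal{I}(\overline{\A}, \xi, \gamma) = \overline{v}$), one obtains
\[
\mathcal{I}(\mathfrak{A}^+, \mathfrak{X}^+, \mu) = v \cdot v = v^2, \qquad \mathcal{I}(\mathfrak{A}^-, \mathfrak{X}^-, \mu) = v \cdot \overline{v} = |v|^2.
\]
Since $v \notin \RR$, we have $v^2 \neq |v|^2$, so Theorem~\ref{Thm_Invariance} excludes any order- and orientation-preserving homeomorphism $(\CC\PP^2,\mathfrak{A}^+) \to (\CC\PP^2,\mathfrak{A}^-)$.

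To remove the orientation hypothesis, apply Lemma~\ref{Lem_OrientationDeletion}: one only has to check that $\mathfrak{A}^-$ is not related to $\overline{\mathfrak{A}^+}$ by an order- and orientation-preserving homeomorphism. The conjugation property gives $\mathcal{I}(\overline{\mathfrak{A}^+}, \mathfrak{X}^+, \mu) = \overline{v^2}$, and the equation $\overline{v^2} = |v|^2$ would force $\overline{v} = v$, contradicting $v \notin \RR$. Theorem~\ref{Thm_Invariance} thus also rules out this comparison, and Lemma~\ref{Lem_OrientationDeletion} delivers the conclusion.

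The main obstacle is not in any single computation --- everything reduces to the multiplicativity formula and the numerical inequalities $v^2 \neq |v|^2$ and $\overline{v^2} \neq |v|^2$, both of which are equivalent to $v \notin \RR$. Rather, the care lies in setting up the bookkeeping so that the glued character $\mathfrak{X}^-$ on $\mathfrak{A}^- = \A \bowtie_{\phi^-} \overline{\A}$ is exactly the one whose restriction to $\overline{\A}$ realizes the conjugation identity $\mathcal{I}(\overline{\A}, \xi, \gamma) = \overline{v}$; this is the same subtlety already handled for the extended Rybnikov arrangements in Section~\ref{Sec_Application}, where $\M^- = \overline{\M^+}$ and the character $\xi_\M$ is taken identically on both copies, and the argument transfers verbatim.
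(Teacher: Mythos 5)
Your proposal is correct and follows essentially the same route as the paper: same ordered combinatorics via genericity, the same glued character on both arrangements, Proposition~\ref{Propo_InnerGluing} plus Theorem~\ref{Thm_Multiplicativity} to get $v^2$ versus $v\bar v$, and then Theorem~\ref{Thm_Invariance} with \cite[Proposition 2.5]{ArtFloGue} and Lemma~\ref{Lem_OrientationDeletion} to drop the orientation hypothesis. If anything, your bookkeeping is slightly more careful than the paper's, which writes $\mathcal{I}(\mathfrak{A}^-,\mathfrak{X},\mu)=1$ rather than $|v|^2$ (these agree only when $|v|=1$), whereas your comparison $v^2\neq v\bar v \Leftrightarrow v\notin\RR$ works for any $v\in\CC^*$.
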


\begin{proof}
	The proof is similar to Theorem~\ref{Thm_Rybnikov}. Since $\phi^+$ and $\phi^-$ are generic gluings, then $\mathfrak{A}^+$ and $\mathfrak{A}^-$ have the same order combinatorics. Thus, we define on $\HH_1(E_{\mathfrak{A}^+})$ and $\HH_1(E_{\mathfrak{A}^+})$ the same character $\mathfrak{X}$ by the glued characters $\xi\bowtie_{\phi^+}\xi$ and $\xi\bowtie_{\phi^-}\xi$.
	
	By Proposition~\ref{Propo_InnerGluing}, $(\mathfrak{A}^+,\mathfrak{X},\mu)$ and $(\mathfrak{A}^-,\mathfrak{X},\mu)$ are inner-cyclic arrangements, where $\mu$ is supported by $D_1$, $D_2$ and $D_3$. Then Theorem~\ref{Thm_Multiplicativity} implies that:
	\begin{equation*}
		\mathcal{I}(\mathfrak{A}^+,\mathfrak{X},\mu) = \mathcal{I}(\A,\xi,\gamma) \cdot \mathcal{I}(\A,\xi,\gamma) \notin\RR^{*}_{>0},
	\end{equation*}
	and with~\cite[Proposition 2.5]{ArtFloGue},
	\begin{equation*}
		\mathcal{I}(\mathfrak{A}^-,\mathfrak{X},\mu) = \mathcal{I}(\A,\xi,\gamma) \cdot \mathcal{I}(\overline{\A},\xi,\gamma) = 1 .			
	\end{equation*}
	Then, by Theorem~\ref{Thm_Invariance}, there is no homeomorphism preserving both ordered and orientation between $(\CC\PP^2,\mathfrak{A}^+)$ and $(\CC\PP^2,\mathfrak{A}^-)$. We conclude using~\cite[Proposition 2.5]{ArtFloGue} and Lemma~\ref{Lem_OrientationDeletion} as in the proof of Theorem~\ref{Thm_Rybnikov}.
\end{proof}

\begin{cor}
	If the automorphism group of the combinatorics of $\A$ is trivial, then we can remove the hypothesis ``order-preserving'' in Theorem~\ref{Thm_ZP}.
\end{cor}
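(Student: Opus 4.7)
The plan is to reduce the corollary to Theorem~\ref{Thm_ZP} by showing that, when $\aut(\C_\A)=1$, any homeomorphism $h:(\CC\PP^2,\mathfrak{A}^+)\to(\CC\PP^2,\mathfrak{A}^-)$ can, after a purely combinatorial relabeling that preserves the $\mathcal{I}$-invariant, be treated as order-preserving, which is then excluded by Theorem~\ref{Thm_ZP}.

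The key technical step is to compute $\aut(\C_{\mathfrak{A}})$, where $\C_\mathfrak{A}$ denotes the common combinatorics of $\mathfrak{A}^+$ and $\mathfrak{A}^-$. Because the gluings $\phi^{\pm}$ are generic, the triangle lines $D_1,D_2,D_3$ are the only ones receiving incidences from both copies of $\A$, and are therefore combinatorially distinguished from the other lines. Hence any $\sigma\in\aut(\C_{\mathfrak{A}})$ preserves $\{D_1,D_2,D_3\}$ as a set and either preserves or exchanges the two subsets of non-triangle lines. In both situations, $\sigma$ restricts on each copy of $\A$ to an element of $\aut(\C_\A)$, which is trivial by hypothesis; in particular each $D_i$ ($i=1,2,3$) is fixed. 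This forces $\aut(\C_{\mathfrak{A}})=\{1,\tau\}\cong\ZZ_2$, where $\tau$ fixes each triangle line and swaps $D_j\leftrightarrow D_{j+n-3}$ for $j=4,\ldots,n$. A direct check then shows that $\tau$ fixes both the cycle $\mu$ (supported on the fixed triangle) and the glued character $\mathfrak{X}$, since by construction both copies carry the same character $\xi$, so that $\mathfrak{X}(\mathfrak{m}_j)=\mathfrak{X}(\mathfrak{m}_{j+n-3})$ for $j=4,\ldots,n$.

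Now suppose for contradiction that such an $h$ exists, and let $\sigma\in\{1,\tau\}$ be the combinatorial automorphism it induces. Relabeling $\mathfrak{A}^-$ by $\sigma^{-1}$ yields an ordered arrangement for which $h$ becomes order-preserving, and by the $\tau$-invariance of $\mathfrak{X}$ and $\mu$ its $\mathcal{I}$-invariant is unchanged. But the proof of Theorem~\ref{Thm_ZP} shows $\mathcal{I}(\mathfrak{A}^+,\mathfrak{X},\mu)\neq\mathcal{I}(\mathfrak{A}^-,\mathfrak{X},\mu)$, so the order- and orientation-preserving case of Theorem~\ref{Thm_Invariance} is violated; the orientation hypothesis is removed via complex conjugation together with Lemma~\ref{Lem_OrientationDeletion}, exactly as in the proof of Theorem~\ref{Thm_Rybnikov}. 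The main difficulty I anticipate is the careful verification of $\aut(\C_{\mathfrak{A}})$, and in particular ruling out combinatorial automorphisms that non-trivially permute the triangle $\{D_1,D_2,D_3\}$: such a permutation would force a matching non-trivial permutation of $\{L_1,L_2,L_3\}$ on each copy of $\A$, contradicting $\aut(\C_\A)=1$.
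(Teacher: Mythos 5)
Your proposal is correct and follows essentially the same route as the paper: both arguments reduce to the ordered case by observing that, since $\aut(\C_\A)$ is trivial, the only possible combinatorial automorphism induced by a homeomorphism is the swap of the two copies of $\A$, which preserves the glued character $\mathfrak{X}$ and the cycle $\mu$ and hence the $\mathcal{I}$-invariant computation of Theorem~\ref{Thm_ZP}. Your write-up is in fact more detailed than the paper's (which merely asserts that $\sigma$ is trivial or exchanges the copies and that the reordering is ``compatible''), notably in explicitly determining $\aut(\C_{\mathfrak{A}})\cong\ZZ_2$ and in ruling out nontrivial permutations of the triangle $\set{D_1,D_2,D_3}$.
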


\begin{proof}
	Let us assume that there is a homeomorphism $\psi$ between $(\CC\PP^2,\mathfrak{A}^+)$ and $(\CC\PP^2,\mathfrak{A}^-)$. Then $\psi$ induces an automorphism $\sigma$ of the combinatorics $\C_\mathfrak{A}$. By hypothesis, $\sigma$ acts trivially on the combinatorics of $\mathfrak{A}$ or exchanges the copies of the combinatorics $\A$. If we change the order on $\mathfrak{A}^+$ by exchanging the order of the two copies of $\A^+$ then $\psi$ becomes an order-preserving homeomorphism. But this change of order is compatible with the arguments of Theorem~\ref{Thm_ZP} proof, which implies a contradiction with the existence of such a homeomorphism. 
\end{proof}

\begin{rmk}
	 The Zariski pairs then obtained are non-arithmetic Zariski pairs (\emph{i.e.} their equations are not conjugated in a field number).
\end{rmk}

\section*{Acknowledgments}

	The results presented here were partially obtained during a visit to Zaragoza University. The author thanks the university for its hospitality. He also would like to thank E.~Artal for all the interesting discussions and his helpful comments on this article.

\bibliographystyle{amsplain}
\bibliography{biblio}

\def\cprime{$'$}
\providecommand{\bysame}{\leavevmode\hbox to3em{\hrulefill}\thinspace}
\providecommand{\MR}{\relax\ifhmode\unskip\space\fi MR }
\providecommand{\MRhref}[2]{%
  \href{http://www.ams.org/mathscinet-getitem?mr=#1}{#2}
}
\providecommand{\href}[2]{#2}
\begin{thebibliography}{10}

\bibitem{Art_Couples}
Enrique Artal, \emph{Sur les couples de {Z}ariski}, J. Algebraic Geom.
  \textbf{3} (1994), no.~2, 223--247. \MR{1257321 (94m:14033)}

\bibitem{ArtCarCogMar_Topology}
Enrique Artal, Jorge Carmona-Ruber, Jos{\'e}~I. Cogolludo-Agust{\'{\i}}n, and
  Miguel~{\'A}. Marco~Buzun{\'a}riz, \emph{Topology and combinatorics of real
  line arrangements}, Compos. Math. \textbf{141} (2005), no.~6, 1578--1588.
  \MR{2188450 (2006k:32055)}

\bibitem{ArtCarCogMar_Invariants}
\bysame, \emph{Invariants of combinatorial line arrangements and {R}ybnikov's
  example}, Singularity theory and its applications (Tokyo), Adv. Stud. Pure
  Math., vol.~43, Math. Soc. Japan, 2006, pp.~1--34. \MR{2313406 (2008g:32042)}

\bibitem{ArtFloGue}
Enrique Artal, Vincent Florens, and Beno{\^i}t Guerville-Ball{\'e}, \emph{A
  topological invariant of line arrangements}, Available at
  \texttt{arXiv:1407.3387}, to appear in Ann. Sc. Norm. Super. Pisa Cl. Sci.,
  2016.

\bibitem{Gue_Thesis}
Beno{\^i}t Guerville-Ball{\'e}, \emph{{Topological invariants of line
  arrangements}}, Ph.D. thesis, Universit{\'e} de Pau et des Pays de l'Adour
  and Universidad de Zaragoza, 2013.

\bibitem{Gue_Pairs}
Beno{\^{\i}}t Guerville-Ball{\'e}, \emph{An arithmetic {Z}ariski 4--tuple of
  twelve lines}, Geom. Topol. \textbf{20} (2016), no.~1, 537--553. \MR{3470721}

\bibitem{Mac}
Saunders MacLane, \emph{Some {I}nterpretations of {A}bstract {L}inear
  {D}ependence in {T}erms of {P}rojective {G}eometry}, Amer. J. Math.
  \textbf{58} (1936), no.~1, 236--240. \MR{1507146}

\bibitem{OrlSol}
Peter Orlik and Louis Solomon, \emph{Combinatorics and topology of complements
  of hyperplanes}, Invent. Math. \textbf{56} (1980), no.~2, 167--189.
  \MR{558866 (81e:32015)}

\bibitem{Ryb_Preprint}
Grigori~L. Rybnikov, \emph{{On the fundamental group of the complement of a
  complex hyperplane arrangement}}, Available at
  \texttt{arXiv:math/9805056v1 [math.AG]}, 1998.

\bibitem{Ryb_Article}
\bysame, \emph{On the fundamental group of the complement of a complex
  hyperplane arrangement}, Funktsional. Anal. i Prilozhen. \textbf{45} (2011),
  no.~2, 71--85. \MR{2848779 (2012i:14067)}

\bibitem{Wes}
Eric Westlund, \emph{{The boundary manifold of an arrangement}}, Ph.D. thesis,
  University of Wisconsin - Madison, 1997.

\bibitem{Whi}
Geoff Whittle, \emph{Oriented matroids (a. bjorner, m. las vergnas, b.
  sturmfels, n. white, and g. ziegler)}, SIAM Review \textbf{36} (1994), no.~4,
  679--680.

\bibitem{Zar_Problem}
Oscar Zariski, \emph{On the {P}roblem of {E}xistence of {A}lgebraic {F}unctions
  of {T}wo {V}ariables {P}ossessing a {G}iven {B}ranch {C}urve}, Amer. J. Math.
  \textbf{51} (1929), no.~2, 305--328. \MR{1506719}

\bibitem{Zar_Topological}
\bysame, \emph{The topological discriminant group of a riemann surface of genus
  p}, American Journal of Mathematics \textbf{59} (1937), no.~2, pp. 335--358
  (English).

\end{thebibliography}

\end{document}